\documentclass[12pt,a4paper]{article}
\usepackage[T1]{fontenc}
\usepackage[utf8]{inputenc}
\usepackage{lmodern}
\usepackage{amsmath,amsthm,amssymb}
\usepackage[margin=25mm]{geometry}
\usepackage{bm}
\usepackage{microtype}
\usepackage{needspace}
\usepackage[hidelinks]{hyperref}

\theoremstyle{definition}
\newtheorem{definition}{Definition}
\theoremstyle{plain}
\newtheorem{thm}[definition]{Theorem}
\newtheorem{prop}[definition]{Proposition}
\newtheorem{lem}[definition]{Lemma}
\newtheorem{cor}[definition]{Corollary}
\theoremstyle{remark}

\DeclareMathOperator{\Ker}{Ker}
\DeclareMathOperator{\Hilb}{Hilb}
\DeclareMathOperator{\Frac}{Frac}
\DeclareMathOperator{\Span}{span}
\DeclareMathOperator{\Gp}{gp}

\hypersetup{
  pdftitle={Generating Sets for the Kernels of Frobenius Polarization Operators},
  pdfauthor={Naoya Yamaguchi}
}
\begin{document}

\title{Generating Sets for the Kernels of\texorpdfstring{\\}{ }Frobenius Polarization Operators}
\author{Naoya Yamaguchi}
\date{}
\maketitle

\begin{abstract}
We study generating sets for the kernels of the Frobenius polarization
operators associated with elements of finite groups.
By applying the discrete Fourier transform on each coset, we describe
these kernels in terms of additive semigroups defined by zero-sum relations
among roots of unity. We show that the monomials corresponding to the
minimal nonzero elements of these semigroups form generating sets of
minimum cardinality.
We further describe the minimal nonzero elements in the general case
in terms of those in the cyclic case and obtain a formula for the
minimum number of generators.
For the operator associated with a generator of a cyclic group, we also
show, as a consequence of known results on integer zero-sum relations
and circulant determinant representations, that its kernel and the
subalgebra generated by these circulant determinants have the same
field of fractions.
\end{abstract}

\section{Introduction}

Let $G$ be a finite group with identity element $e$.
For each $g\in G$, let $x_g$ be an indeterminate, with all these
indeterminates commuting and algebraically independent, and write
$\bm{x}:=\{x_g\mid g\in G\}$.
For each element $g\in G$, define the Frobenius polarization operators
$D_g$ and $\Delta_g$ by
\[
D_g:=\sum_{h\in G}x_{g^{-1}h}\frac{\partial}{\partial x_h},
\qquad
\Delta_g:=\sum_{h\in G}x_{hg^{-1}}\frac{\partial}{\partial x_h}.
\]
These operators were introduced by Frobenius~\cite[reprinted edition,
p.~50]{Frobenius1896} in his study of the group determinant $\Theta(G)$;
see also van der Waerden~\cite[p.~226]{MR803326} and
Yamaguchi--Yamaguchi~\cite[Sections~1 and~5]{YamaguchiYamaguchi2026}
for related expositions.
The Frobenius polarization operators satisfy
\[
D_g(x_h)=x_{g^{-1}h},\qquad \Delta_g(x_h)=x_{hg^{-1}}.
\]
Since they are derivations that send each variable to a homogeneous
linear polynomial, they are linear derivations.
It is known that the ring of constants of a linear derivation of a
polynomial ring over a field of characteristic zero is finitely generated.
Indeed, Nowicki--Tyc~\cite[Theorem~3.2]{MR1856934} state this result,
referring to Nagata's lectures on Hilbert's fourteenth
problem~\cite{MR215828} and Tyc's work on locally finite actions of
commutative Hopf algebras~\cite{MR1388860}.
Consequently, the kernels of the Frobenius polarization operators,
\begin{align*}
\Ker D_g&:=\{f(\bm{x})\in\mathbb C[\bm{x}]\mid D_gf(\bm{x})=0\},\\
\Ker\Delta_g&:=\{f(\bm{x})\in\mathbb C[\bm{x}]\mid\Delta_gf(\bm{x})=0\},
\end{align*}
are finitely generated $\mathbb C$-algebras.
Throughout this paper, generators and generating sets of a kernel
are understood to be those of a $\mathbb C$-algebra unless otherwise stated.

Nowicki--Strelcyn~\cite[Theorem~1]{NowickiStrelcyn1995} also show that,
even when the number of variables is fixed at an integer at least three,
the minimum number of generators of the ring of constants of a diagonal
derivation can be any nonnegative integer.

As for the common kernels of several Frobenius polarization operators,
it has been shown in~\cite{YamaguchiYamaguchi2026} that, for a finite
abelian group $G$,
\[
\bigcap_{g\in G\setminus\{e\}}\Ker D_g
=\bigcap_{g\in G\setminus\{e\}}\Ker\Delta_g
=\mathbb C[\Theta(G)].
\]
This equality has also been established for arbitrary finite groups,
without the assumption of commutativity~\cite{YamaguchiIto2026}.

For the linear derivation associated with a generator of a cyclic group
of order $n\geq3$, Moulin Ollagnier--Nowicki~\cite[Propositions~2.3
and~3.1]{MoulinOllagnierNowicki2013} describe a generating set of the
ring of constants in terms of the minimal nonzero exponent vectors with nonnegative integer entries
satisfying zero-sum relations among the $n$th roots
of unity, and characterize the minimum number of generators.
Moreover, writing $\omega(n)$ for the number of distinct prime divisors
of $n$, they show that the minimum number of generators equals
\[
\sum_{\substack{p\mid n\\p:\,\mathrm{prime}}}\frac{n}{p}
\]
if and only if $\omega(n)\leq2$~\cite[Corollary~3.13]{MoulinOllagnierNowicki2013}.
The same conclusion holds for $n=2$, as can be checked directly.

Yamaguchi--Ochiai--Yamaguchi~\cite[Section~2.1]{YamaguchiOchiaiYamaguchi2025}
describe the kernel using monomials in the Fourier variables whose
exponent vectors satisfy the zero-sum condition, and express the monomials
corresponding to prime cycles as circulant determinants~\cite[Lemma~13]{YamaguchiOchiaiYamaguchi2025}.
Combining Theorems~1 and~2 of that paper yields a generating set of the
kernel consisting of these circulant determinants, together with the
relations among the generators, when
$\omega(n)\leq2$~\cite[Theorems~1 and~2]{YamaguchiOchiaiYamaguchi2025}.
In contrast, when $\omega(n)\geq3$, these circulant determinants alone
do not generate the entire kernel as an
algebra~\cite[Theorem~3]{YamaguchiOchiaiYamaguchi2025}.

Starting from the known description of generating sets in the cyclic
case, we study generating sets for the kernel of the Frobenius
polarization operator $D_g$ associated with an element $g$ of an arbitrary
finite group $G$.
Writing $m=|\langle g\rangle|$ and $r=[G:\langle g\rangle]$, we describe
$\Ker D_g$ in terms of an additive semigroup $M_{m,r}$ defined by
zero-sum relations in which each $m$th root of unity occurs with
multiplicity $r$ (Theorem~\ref{thm:1}).
The monomials corresponding to its minimal nonzero elements form a
generating set of minimum cardinality (Theorem~\ref{thm:2}).
In particular, for arbitrary multiplicity $r$, we describe the minimal
nonzero elements of $M_{m,r}$ in terms of those of the semigroup $M_m$
associated with a cyclic group of order $m$, and give a counting formula
for the minimum number of generators in terms of $\Hilb(M_m)$ and $r$
(Theorem~\ref{thm:3}).

In the cyclic case, we also combine the R\'edei--de Bruijn--Schoenberg
theorem~\cite[Theorem~2.2]{MR1736695} with the circulant determinant
representation of Yamaguchi--Ochiai--Yamaguchi~\cite[Lemma~13]
{YamaguchiOchiaiYamaguchi2025} to make explicit, in our notation,
a consequence of known results: the kernel and the subalgebra generated
by these circulant determinants have the same field of fractions
(Theorem~\ref{thm:5}).
This consequence holds for every order $n\geq2$.
In particular, even when the circulant determinants do not generate
the entire kernel as an algebra, every monomial in the Fourier variables
that belongs to the kernel can be expressed as a ratio of products of
these circulant determinants.

For a finite group $G$ and an element $g\in G$, put
\[
H:=\langle g\rangle,\quad m:=|H|,\quad r:=[G:H],\quad
G=\bigsqcup_{i=1}^{r}Ht_i.
\]
Define
\begin{gather*}
\bm y:=\{y_{i,j}\mid 1\leq i\leq r,\ 0\leq j\leq m-1\},\\
y_{i,j}:=\sum_{k=0}^{m-1}\zeta_m^{jk}x_{g^kt_i},
\qquad \zeta_m:=\exp\!\left(\frac{2\pi\sqrt{-1}}{m}\right),
\end{gather*}
and let the additive semigroup $M_{m,r}$ and the $\mathbb C$-algebra
$\mathbb C[M_{m,r}]$ be given by
\begin{gather*}
M_{m,r}:=\left\{\bm\alpha=(\alpha_{i,j})\in\mathbb Z_{\geq0}^{|G|}
\ \middle|\ \sum_{i=1}^{r}\sum_{j=0}^{m-1}\alpha_{i,j}\zeta_m^j=0\right\},\\
\mathbb C[M_{m,r}]:=\mathbb C[\bm y^{\bm\alpha}\mid\bm\alpha\in M_{m,r}].
\end{gather*}

\begin{thm}\label{thm:1}
For the Frobenius polarization operator $D_g$, we have
\[
\Ker D_g
=\bigoplus_{\bm\alpha\in M_{m,r}}\mathbb C\bm y^{\bm\alpha}
=\mathbb C[M_{m,r}].
\]
\end{thm}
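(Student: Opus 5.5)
We diagonalize $D_g$ by a linear change of variables and reduce to a diagonal derivation, whose kernel is spanned by monomials. First compute the action of $D_g$ on the Fourier variables. Since $D_g$ is a derivation with $D_g(x_h)=x_{g^{-1}h}$, and $x_{g^kt_i}$ is sent to $x_{g^{k-1}t_i}$ (indices of $g$ taken mod $m$), we get
\[
D_g(y_{i,j})=\sum_{k=0}^{m-1}\zeta_m^{jk}x_{g^{k-1}t_i}
=\sum_{k=0}^{m-1}\zeta_m^{j(k+1)}x_{g^{k}t_i}=\zeta_m^{j}y_{i,j}.
\]
Thus each $y_{i,j}$ is an eigenvector of the derivation with eigenvalue $\zeta_m^j$. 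The left coset decomposition is irrelevant here: $D_g$ acts by left multiplication, so each right coset $Ht_i$ is preserved.

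Next we check that $\bm y$ is a legitimate coordinate system. For fixed $i$, the map $(x_{g^kt_i})_k\mapsto(y_{i,j})_j$ is the discrete Fourier transform, given by the Vandermonde matrix $(\zeta_m^{jk})$. This matrix is invertible with inverse $\frac1m(\zeta_m^{-jk})$. Since the cosets partition $G$, the $|G|$ linear forms $y_{i,j}$ are a basis of the linear forms in $\bm x$. Hence $\mathbb C[\bm x]=\mathbb C[\bm y]$, the $y_{i,j}$ are algebraically independent, and the monomials $\bm y^{\bm\alpha}$ with $\bm\alpha\in\mathbb Z_{\ge0}^{|G|}$ form a $\mathbb C$-basis of $\mathbb C[\bm x]$.

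By the Leibniz rule,
\[
D_g(\bm y^{\bm\alpha})=\Bigl(\sum_{i,j}\alpha_{i,j}\zeta_m^j\Bigr)\bm y^{\bm\alpha}.
\]
So $D_g$ is diagonal in this monomial basis. Write $f=\sum c_{\bm\alpha}\bm y^{\bm\alpha}$. Then $D_gf=0$ holds if and only if $c_{\bm\alpha}\bigl(\sum\alpha_{i,j}\zeta_m^j\bigr)=0$ for every $\bm\alpha$, by linear independence of the monomials. This means $c_{\bm\alpha}=0$ unless $\bm\alpha\in M_{m,r}$. This gives the first equality, $\Ker D_g=\bigoplus_{\bm\alpha\in M_{m,r}}\mathbb C\bm y^{\bm\alpha}$; the sum is direct because the monomials are linearly independent.

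For the second equality, $M_{m,r}$ is closed under addition and contains $0$. Hence $\bigoplus_{\bm\alpha\in M_{m,r}}\mathbb C\bm y^{\bm\alpha}$ is closed under multiplication and contains $1$, so it is a subalgebra. It contains all the generators $\bm y^{\bm\alpha}$ of $\mathbb C[M_{m,r}]$, and conversely it is spanned by those generators. Therefore the two coincide.

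There is no serious obstacle. The only points needing care are the index shift in computing $D_g(y_{i,j})$, where the sign convention must produce $\zeta_m^{j}$ rather than $\zeta_m^{-j}$ (the semigroup is symmetric under $j\mapsto -j$ anyway), and the invertibility of the change of variables, which makes the monomials in $\bm y$ a basis.
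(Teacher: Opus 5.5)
Your proposal is correct and takes the same approach as the paper: diagonalize $D_g$ via the discrete Fourier transform on each right coset (the paper's Proposition 2.1), apply the Leibniz rule to monomials in the new variables, and use linear independence of those monomials. You also spell out two points the paper states only briefly: that the change of variables is invertible, and why the span of the kernel monomials equals the generated subalgebra $\mathbb C[M_{m,r}]$.
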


The structure of the additive semigroup $M_{m,r}$ depends only on $m$
and $r$. Thus Theorem~\ref{thm:1} gives the following corollary.

\begin{cor}\label{cor:2}
Let $G$ and $G'$ be finite groups, and let $g\in G$ and $g'\in G'$.
If
\[
|\langle g\rangle|=|\langle g'\rangle|,
\qquad [G:\langle g\rangle]=[G':\langle g'\rangle],
\]
then there is an isomorphism of $\mathbb C$-algebras
\[
\Ker D_g\cong\Ker D_{g'}.
\]
\end{cor}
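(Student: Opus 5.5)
The statement to prove is Corollary~\ref{cor:2}, and it follows from Theorem~\ref{thm:1} once we check that the kernel, written in the Fourier variables, is a presentation that depends only on $m$ and $r$. Put $H'=\langle g'\rangle$, so that $|H'|=m$ and $[G':H']=r$, and fix a coset decomposition $G'=\bigsqcup_{i=1}^r H't_i'$. Let $x'_{h'}$ ($h'\in G'$) be the variables for $G'$, and define $y'_{i,j}:=\sum_{k=0}^{m-1}\zeta_m^{jk}x'_{g'^k t'_i}$. Then Theorem~\ref{thm:1} applied to $(G,g)$ and to $(G',g')$ gives
\[
\Ker D_g=\mathbb C[\bm y^{\bm\alpha}\mid\bm\alpha\in M_{m,r}],\qquad
\Ker D_{g'}=\mathbb C[\bm y'^{\bm\alpha}\mid\bm\alpha\in M_{m,r}].
\]
Here the index set $M_{m,r}\subset\mathbb Z_{\ge0}^{r\times m}$ is literally the same set in both cases, because its defining condition involves only $m$, $r$ and $\zeta_m$.

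Since $|G|=mr=|G'|$, the $\mathbb C$-linear map $\phi\colon\mathbb C[\bm x]\to\mathbb C[\bm x']$ given by $x_{g^kt_i}\mapsto x'_{g'^kt'_i}$ is a bijection of variable sets. It therefore extends to a $\mathbb C$-algebra isomorphism of polynomial rings. By construction $\phi(y_{i,j})=y'_{i,j}$, and hence $\phi(\bm y^{\bm\alpha})=\bm y'^{\bm\alpha}$ for every $\bm\alpha$. So $\phi$ maps the generating set of $\Ker D_g$ onto the generating set of $\Ker D_{g'}$, and consequently $\phi(\Ker D_g)=\Ker D_{g'}$. The restriction of $\phi$ to $\Ker D_g$ is then an injective algebra homomorphism onto $\Ker D_{g'}$, which is the required isomorphism.

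Nothing here is a real obstacle. The one point to check is that the $y_{i,j}$ (and likewise the $y'_{i,j}$) are obtained from the $x$'s by an invertible linear change of variables, namely a block-diagonal DFT. This is already implicit in Theorem~\ref{thm:1}, and in any case we never need it: $\phi$ is defined directly on the $x$ variables. As a remark, one can also see that $\phi$ intertwines the operators. We have $D_g(x_{g^kt_i})=x_{g^{k-1}t_i}$, and the analogous formula holds for $D_{g'}$, so $\phi\circ D_g=D_{g'}\circ\phi$ on generators and hence everywhere. This gives a second proof that does not even use Theorem~\ref{thm:1}.
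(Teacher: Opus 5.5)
Your proof is correct. Your main argument is the paper's argument made explicit. The paper just observes that, by Theorem~\ref{thm:1}, $\Ker D_g=\bigoplus_{\bm\alpha\in M_{m,r}}\mathbb C\bm y^{\bm\alpha}$ is a copy of the semigroup algebra of $M_{m,r}$, which depends only on $m$ and $r$. That identification tacitly uses the linear independence of the monomials $\bm y^{\bm\alpha}$, i.e.\ the invertibility of the block DFT. Your relabeling $\phi$ sends $\bm y^{\bm\alpha}$ to $(\bm y')^{\bm\alpha}$ directly and does not need this.

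Your closing remark is a genuinely different and more elementary route. Since $D_g(x_{g^kt_i})=x_{g^{k-1}t_i}$ (exponents mod $m$), and similarly for $D_{g'}$, the derivations $\phi\circ D_g\circ\phi^{-1}$ and $D_{g'}$ agree on the variables and hence coincide. So $\phi(\Ker D_g)=\Ker D_{g'}$, with no Fourier analysis and no use of Theorem~\ref{thm:1}. This gives a stronger conclusion: $(\mathbb C[\bm x],D_g)$ and $(\mathbb C[\bm x'],D_{g'})$ are isomorphic as differential algebras, so kernels, images and eigenspaces all correspond under an isomorphism of the ambient polynomial rings. The paper's route, in turn, exhibits the common algebra concretely as $\mathbb C[M_{m,r}]$, which is what feeds into Theorems~\ref{thm:2} and~\ref{thm:3}.
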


Theorem~\ref{thm:1} and Dickson's lemma yield a generating set for
the kernel of a Frobenius polarization operator in terms of the minimal
elements of $M_{m,r}\setminus\{\bm0\}$.

\begin{thm}\label{thm:2}
Let $\bm\beta_1,\bm\beta_2,\ldots,\bm\beta_s$ be all the minimal elements
of $M_{m,r}\setminus\{\bm0\}$ with respect to the componentwise partial
order. Then
\[
\Ker D_g=\mathbb C[\bm y^{\bm\beta_1},\bm y^{\bm\beta_2},\ldots,
\bm y^{\bm\beta_s}].
\]
Moreover, these monomials form a minimal generating set of $\Ker D_g$,
and the minimum number of generators of $\Ker D_g$ is $s$.
When $g=e$, we have $\Ker D_e=\mathbb C$; in this case, the generating
set is taken to be empty and the minimum number of generators is zero.
\end{thm}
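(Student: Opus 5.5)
Taking Theorem~\ref{thm:1} as given, $\Ker D_g=\bigoplus_{\bm\alpha\in M_{m,r}}\mathbb C\bm y^{\bm\alpha}$. The first step is to record that the $y_{i,j}$ are algebraically independent. Indeed, on each coset $Ht_i$ the map $(x_{g^kt_i})_k\mapsto(y_{i,j})_j$ is the invertible discrete Fourier (Vandermonde) transform. Hence $\mathbb C[\bm x]=\mathbb C[\bm y]$, and distinct monomials $\bm y^{\bm\alpha}$ are linearly independent. The plan is then to reduce everything to the combinatorics of the monoid $M_{m,r}\subset\mathbb Z_{\ge0}^{rm}$.

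\textbf{Generation.} By Dickson's lemma, $M_{m,r}\setminus\{\bm0\}$ has finitely many minimal elements $\bm\beta_1,\dots,\bm\beta_s$. I would show that every $\bm\alpha\in M_{m,r}$ is a sum of $\bm\beta_k$'s, by induction on $|\bm\alpha|=\sum\alpha_{i,j}$. If $\bm\alpha\ne\bm0$, some minimal $\bm\beta_k\le\bm\alpha$ exists (take a minimal element below $\bm\alpha$ in the finite set $\{\bm\gamma\in M_{m,r}\setminus\{\bm0\}:\bm\gamma\le\bm\alpha\}$). Then $\bm\alpha-\bm\beta_k\in\mathbb Z_{\ge0}^{rm}$, and it satisfies the zero-sum relation because that relation is linear. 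So $\bm\alpha-\bm\beta_k\in M_{m,r}$ with smaller total degree, and induction applies. Consequently every $\bm y^{\bm\alpha}$ is a product of the $\bm y^{\bm\beta_k}$, and by Theorem~\ref{thm:1} these monomials generate $\Ker D_g$.

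\textbf{Minimality and minimum cardinality.} This is the main point. Let $A=\Ker D_g$, graded by $\bm y$-degree (equivalently by $\bm x$-degree, since the $y_{i,j}$ are linear forms). Put $A_+=\bigoplus_{\bm\alpha\ne\bm0}\mathbb C\bm y^{\bm\alpha}$. Note that $A_0=\mathbb C$, since $\bm0$ is the only element of degree $0$. I would compute $A_+^2$ and show
\[
A_+/A_+^2\cong\bigoplus_{k=1}^{s}\mathbb C\,\overline{\bm y^{\bm\beta_k}}.
\]
Because $A$ is spanned by monomials, $A_+^2$ is spanned by the $\bm y^{\bm\alpha}$ with $\bm\alpha$ a sum of two nonzero elements of $M_{m,r}$, i.e.\ with $\bm\alpha$ not minimal. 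So the classes of the minimal monomials form a basis of $A_+/A_+^2$, using linear independence of monomials.

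For any generating set $f_1,\dots,f_t$ of $A$, we may subtract constant terms to assume $f_\ell\in A_+$. Every element of $A_+$ is then a polynomial in the $f_\ell$ without constant term, so it lies in $\sum\mathbb C f_\ell+A_+^2$. Hence the $\bar f_\ell$ span $A_+/A_+^2$ and $t\ge s$. This shows that $s$ is the minimum number of generators, and in particular that $\{\bm y^{\bm\beta_k}\}$ is minimal. Alternatively, minimality can be seen directly: dropping $\bm y^{\bm\beta_k}$ fails because $\bm\beta_k$ is not a sum of other $\bm\beta$'s. The only subtle points are the grading argument and the constant-term adjustment.

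\textbf{The case $g=e$.} Here $m=1$ and $\zeta_1=1$, so the relation forces $\sum\alpha_{i,0}=0$. Thus $M_{1,r}=\{\bm0\}$, there are no minimal nonzero elements, and $\Ker D_e=\mathbb C$. This is consistent with $D_e$ being the Euler operator, whose kernel consists of the degree-$0$ polynomials. The statement then holds with the empty generating set.
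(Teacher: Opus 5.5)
Your proposal is correct and follows essentially the same route as the paper. Generation comes from Dickson's lemma plus repeated subtraction of a minimal element below $\bm\alpha$, with the coordinate sum decreasing at each step. Minimum cardinality comes from $\dim_{\mathbb C} I/I^2=s$ for the ideal $I=A_+$, after shifting any generating set by its constant terms; your explicit check of the case $g=e$ via $M_{1,r}=\{\bm 0\}$ fills in a detail the paper only states.
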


For the additive semigroup
\[
M_m:=\left\{\bm c=(c_0,c_1,\ldots,c_{m-1})\in\mathbb Z_{\geq0}^m
\ \middle|\ \sum_{j=0}^{m-1}c_j\zeta_m^j=0\right\},
\]
denote the set of minimal elements of $M_m\setminus\{\bm0\}$ by
\[
\Hilb(M_m).
\]
Define a map $\pi\colon M_{m,r}\to M_m$ by
\[
\pi(\bm\alpha)_j:=\sum_{i=1}^{r}\alpha_{i,j}.
\]

\begin{thm}\label{thm:3}
The set
\[
\left\{\bm y^{\bm\alpha}\mid\bm\alpha\in\pi^{-1}(\Hilb(M_m))\right\}
\]
is a minimal generating set of $\Ker D_g$. Its cardinality is
\[
\sum_{(c_0,c_1,\ldots,c_{m-1})\in\Hilb(M_m)}
\prod_{j=0}^{m-1}\binom{c_j+r-1}{r-1}.
\]
\end{thm}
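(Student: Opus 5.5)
The plan is to deduce Theorem~\ref{thm:3} from Theorem~\ref{thm:2}, which says that the monomials $\bm y^{\bm\beta}$ with $\bm\beta$ minimal in $M_{m,r}\setminus\{\bm0\}$ form a minimal generating set. It therefore suffices to show that the minimal nonzero elements of $M_{m,r}$ are exactly $\pi^{-1}(\Hilb(M_m))$, and then to count this preimage.

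First, $\pi$ is a well-defined semigroup homomorphism $M_{m,r}\to M_m$. Indeed,
\[
\sum_j\pi(\bm\alpha)_j\zeta_m^j=\sum_{i,j}\alpha_{i,j}\zeta_m^j=0 .
\]
Moreover, $\pi$ is order preserving and $\pi(\bm\alpha)=\bm0$ iff $\bm\alpha=\bm0$. The key observation is that membership in $M_{m,r}$ depends only on $\pi(\bm\alpha)$: a vector $\bm\alpha\in\mathbb Z_{\geq0}^{rm}$ lies in $M_{m,r}$ iff $\pi(\bm\alpha)\in M_m$.

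Next, the two inclusions.

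\emph{Preimage is minimal.} Suppose $\pi(\bm\alpha)\in\Hilb(M_m)$ and $\bm0\neq\bm\gamma\leq\bm\alpha$ with $\bm\gamma\in M_{m,r}$. Then $\bm0\neq\pi(\bm\gamma)\leq\pi(\bm\alpha)$ and $\pi(\bm\gamma)\in M_m$. Minimality of $\pi(\bm\alpha)$ forces $\pi(\bm\gamma)=\pi(\bm\alpha)$. Then $\sum(\alpha_{i,j}-\gamma_{i,j})=0$ with all terms nonnegative, so $\bm\gamma=\bm\alpha$.

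\emph{Minimal lies in the preimage.} Suppose $\bm\alpha$ is minimal in $M_{m,r}\setminus\{\bm0\}$ but $\pi(\bm\alpha)$ is not in $\Hilb(M_m)$. Then there is $\bm c\in M_m$ with $\bm0\neq\bm c<\pi(\bm\alpha)$. Lift $\bm c$ under $\bm\alpha$: for each $j$ we have $c_j\leq\sum_i\alpha_{i,j}$, so we can choose $0\leq\gamma_{i,j}\leq\alpha_{i,j}$ with $\sum_i\gamma_{i,j}=c_j$, for instance greedily in $i$. Then $\bm\gamma\in M_{m,r}$ by the key observation, and $\bm0\neq\bm\gamma<\bm\alpha$, contradicting minimality. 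This lifting step is the only point needing care, and it is elementary.

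Hence the set in the statement is precisely the set $\{\bm y^{\bm\beta_1},\ldots,\bm y^{\bm\beta_s}\}$ of Theorem~\ref{thm:2}, so it is a minimal generating set. Distinct exponent vectors give distinct monomials, so its cardinality is $|\pi^{-1}(\Hilb(M_m))|$. For fixed $\bm c$, the fibre $\pi^{-1}(\bm c)$ consists of independent choices, one for each $j$, of $(\alpha_{1,j},\ldots,\alpha_{r,j})\in\mathbb Z_{\geq0}^r$ summing to $c_j$. By stars and bars there are $\binom{c_j+r-1}{r-1}$ such choices, so
\[
|\pi^{-1}(\bm c)|=\prod_j\binom{c_j+r-1}{r-1}.
\]
Summing over $\bm c\in\Hilb(M_m)$ gives the stated formula.

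Finally, the degenerate case $g=e$ (so $m=1$) needs a check. There $M_1=\{0\}$, so $\Hilb(M_1)=\emptyset$ and the count is zero, matching the convention in Theorem~\ref{thm:2}.
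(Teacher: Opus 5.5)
Your proposal is correct and follows essentially the same route as the paper: you prove that the minimal elements of $M_{m,r}\setminus\{0\}$ are exactly $\pi^{-1}(\Hilb(M_m))$ by the same two inclusions (lifting a smaller zero-sum vector coordinatewise under $\alpha$, and pushing a smaller element forward via $\pi$), then invoke Theorem~\ref{thm:2} and count fibres by stars and bars. You also make explicit two points the paper leaves implicit, namely that strict inequality is preserved under $\pi$ and that the degenerate case $g=e$ gives an empty set.
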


We next give a result concerning the generators of $\Ker D_1$ when
$G=\mathbb Z/n\mathbb Z=\{0,1,\ldots,n-1\}$ is a cyclic group of
order $n\geq2$.
For each prime divisor $p$ of $n$, put $n_p:=n/p$.
For $0\leq i\leq n_p-1$, define
\[
y^{(p)}_{i,j}:=\sum_{l=0}^{n_p-1}\zeta_n^{i(j+pl)}x_{j+pl}
\qquad (0\leq j\leq p-1)
\]
and
\[
\Theta_p(\bm y_i^{(p)}):=
\det\begin{pmatrix}
y^{(p)}_{i,0}&y^{(p)}_{i,p-1}&\cdots&y^{(p)}_{i,1}\\
y^{(p)}_{i,1}&y^{(p)}_{i,0}&\cdots&y^{(p)}_{i,2}\\
\vdots&\vdots&\ddots&\vdots\\
y^{(p)}_{i,p-1}&y^{(p)}_{i,p-2}&\cdots&y^{(p)}_{i,0}
\end{pmatrix}.
\]
Let
\[
R_n:=\mathbb C[\Theta_p(\bm y_i^{(p)})\mid
p\text{ is a prime divisor of }n,\ 0\leq i\leq n_p-1]
\]
be the $\mathbb C$-subalgebra of $\mathbb C[\bm x]$ generated by these
circulant determinants.
For an integral domain $A$, write
\[
\Frac(A):=\left\{\frac ab\ \middle|\ a,b\in A,\ b\neq0\right\}
\]
for its field of fractions.

\begin{thm}\label{thm:5}
For the cyclic group $G=\mathbb Z/n\mathbb Z=\langle1\rangle$ of order
$n\geq2$, we have
\[
\Ker D_1\subset\Frac(\Ker D_1)=\Frac(R_n).
\]
\end{thm}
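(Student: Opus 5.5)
We work in the cyclic case $G=\mathbb Z/n\mathbb Z$, $g=1$, so $m=n$ and $r=1$. By Theorem~\ref{thm:1}, $\Ker D_1=\bigoplus_{\bm c\in M_n}\mathbb C\bm y^{\bm c}$, where $y_j=\sum_k\zeta_n^{jk}x_k$. The inclusion $\Ker D_1\subset\Frac(\Ker D_1)$ holds trivially, since $\Ker D_1$ is a subring of the domain $\mathbb C[\bm x]$. For the equality we prove two inclusions. The first is $R_n\subset\Ker D_1$, which gives $\Frac(R_n)\subset\Frac(\Ker D_1)$. The second is that every $\bm y^{\bm c}$ with $\bm c\in M_n$ lies in $\Frac(R_n)$. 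Since $\Ker D_1$ is spanned by such monomials, the second inclusion yields $\Ker D_1\subset\Frac(R_n)$, and hence $\Frac(\Ker D_1)\subset\Frac(R_n)$.

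First we identify the circulant determinants with monomials in $\bm y$. Following the circulant determinant representation of \cite[Lemma~13]{YamaguchiOchiaiYamaguchi2025}, we factor the $p\times p$ circulant determinant by the Fourier transform of order $p$. This gives
\[
\Theta_p(\bm y^{(p)}_i)=\prod_{k=0}^{p-1}\sum_{j=0}^{p-1}\zeta_p^{jk}y^{(p)}_{i,j}
=\prod_{k=0}^{p-1}\sum_{x'=0}^{n-1}\zeta_n^{(i+n_pk)x'}x_{x'}
=\prod_{k=0}^{p-1}y_{i+n_pk}.
\]
The product runs over the coset $i+n_p\mathbb Z$ of the subgroup of order $p$ in $\mathbb Z/n\mathbb Z$. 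Its exponent vector is the indicator vector $\bm e^{(p)}_i$ of that coset. Because $\sum_k\zeta_n^{i+n_pk}=\zeta_n^i\sum_k\zeta_p^k=0$, we have $\bm e^{(p)}_i\in M_n$. By Theorem~\ref{thm:1} this shows $\Theta_p(\bm y^{(p)}_i)\in\Ker D_1$, and therefore $R_n\subset\Ker D_1$.

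Now take any $\bm c\in M_n$. The R\'edei--de Bruijn--Schoenberg theorem \cite[Theorem~2.2]{MR1736695} states that every integer vector $\bm c\in\mathbb Z^n$ with $\sum_jc_j\zeta_n^j=0$ is a $\mathbb Z$-linear combination of the vectors $\bm e^{(p)}_i$, where $p$ ranges over the primes dividing $n$. We write
\[
\bm c=\sum a_{p,i}\bm e^{(p)}_i,\qquad a_{p,i}\in\mathbb Z,
\]
and split the coefficients by sign, so that $\bm c=\bm u-\bm v$, where $\bm u$ and $\bm v$ are nonnegative integer combinations of the $\bm e^{(p)}_i$. Then
\[
\bm y^{\bm c}=\frac{\prod_{a_{p,i}>0}\Theta_p(\bm y^{(p)}_i)^{a_{p,i}}}{\prod_{a_{p,i}<0}\Theta_p(\bm y^{(p)}_i)^{-a_{p,i}}}
\]
holds as an identity in $\Frac(\mathbb C[\bm x])$. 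The denominator is nonzero because the $y_j$ are linearly independent linear forms, so each $y_j$ is nonzero. Hence $\bm y^{\bm c}\in\Frac(R_n)$, which completes both inclusions.

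The main obstacle is the index bookkeeping in the factorization of the circulant determinant. We must confirm that the Fourier transform of order $p$ applied to $y^{(p)}_{i,\cdot}$ produces exactly $y_{i+n_pk}$ with the paper's conventions: the orientation of the circulant matrix and the sign of the exponent of $\zeta$. We check this with the computation $\zeta_n^{i(j+pl)}\zeta_p^{jk}=\zeta_n^{i(j+pl)+n_pk(j+pl)}$. This identity holds because $\zeta_n^{n_pkpl}=1$ and $\zeta_p=\zeta_n^{n_p}$. If the orientation were reversed, the determinant would be the product of the $y$'s over the same coset up to a permutation of $k$, so the conclusion is unaffected.
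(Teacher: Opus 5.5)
Your proposal is correct and follows essentially the same route as the paper. Like the paper, you use Theorem~\ref{thm:1} to reduce $\Ker D_1$ to the monomials $\bm y^{\bm c}$ with $\bm c\in M_n$, the circulant factorization $\Theta_p(\bm y_i^{(p)})=\prod_{k}y_{i+n_pk}$ of \cite[Lemma~13]{YamaguchiOchiaiYamaguchi2025} (which you re-derive correctly), and the R\'edei--de Bruijn--Schoenberg theorem to write each $\bm c$ as an integer combination of the coset indicator vectors. The only difference is cosmetic: you argue by two explicit inclusions, essentially proving Corollary~\ref{cor:6} along the way, instead of through the group equality of Lemma~\ref{lem:15}, so you never need the step $L_n\subset\Gp(M_n)$ obtained by adding $N\bm 1$.
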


Let $\bm e_0,\bm e_1,\ldots,\bm e_{n-1}$ be the standard basis of
$\mathbb Z^n$. For each prime divisor $p$ of $n$, put
\[
\bm v_i^{(p)}:=\sum_{j=0}^{p-1}\bm e_{i+n_pj}
\qquad(0\leq i\leq n_p-1).
\]
The proof of Theorem~\ref{thm:5} yields the following corollary.

\begin{cor}\label{cor:6}
For every $\bm\alpha\in M_{n,1}=M_n$, the monomial
$\bm y^{\bm\alpha}\in\Ker D_1$ can be expressed as a ratio of products
of the circulant determinants $\Theta_p(\bm y_i^{(p)})$.
More precisely, there exist integers $c_{p,i}$ such that
\[
\bm\alpha=\sum_{\substack{p\mid n\\p:\,\text{prime}}}
\sum_{i=0}^{n_p-1}c_{p,i}\bm v_i^{(p)}.
\]
Writing $c_{p,i}=c_{p,i}^+-c_{p,i}^-$ with
$c_{p,i}^+,c_{p,i}^-\in\mathbb Z_{\geq0}$, we obtain
\[
\bm y^{\bm\alpha}
=\prod_{\substack{p\mid n\\p:\,\text{prime}}}
\prod_{i=0}^{n_p-1}
\frac{\Theta_p(\bm y_i^{(p)})^{c_{p,i}^+}}
{\Theta_p(\bm y_i^{(p)})^{c_{p,i}^-}}.
\]
\end{cor}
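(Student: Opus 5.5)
The plan is to prove the integer decomposition of $\bm\alpha$ first and then pass to monomials, because the identity $\Frac(\Ker D_1)=\Frac(R_n)$ in Theorem~\ref{thm:5} follows directly from Corollary~\ref{cor:6} once the latter is established.

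\textbf{Integer decomposition.} We use the R\'edei--de Bruijn--Schoenberg theorem~\cite[Theorem~2.2]{MR1736695}. It states that the $\mathbb Z$-module of integer vectors $\bm a\in\mathbb Z^n$ with $\sum_j a_j\zeta_n^j=0$ is spanned by the vectors $\bm v_i^{(p)}$. Here $p$ runs over the prime divisors of $n$ and $0\leq i\leq n_p-1$, and $\bm v_i^{(p)}$ records the relation $\zeta_n^i\sum_{j=0}^{p-1}\zeta_p^j=0$. Every $\bm\alpha\in M_n$ lies in this module, so integers $c_{p,i}$ with $\bm\alpha=\sum_{p,i}c_{p,i}\bm v_i^{(p)}$ exist. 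Rearranging gives
\[
\bm\alpha+\sum c_{p,i}^-\bm v_i^{(p)}=\sum c_{p,i}^+\bm v_i^{(p)},
\]
where both sides have nonnegative entries.

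\textbf{Passing to monomials.} We invoke the circulant determinant representation~\cite[Lemma~13]{YamaguchiOchiaiYamaguchi2025}. The point to check is that, with $m=n$, $r=1$, $t_1=0$ and $y_j=\sum_k\zeta_n^{jk}x_k$, one has $\Theta_p(\bm y_i^{(p)})=\bm y^{\bm v_i^{(p)}}=\prod_{j=0}^{p-1}y_{i+n_pj}$. This is the standard diagonalization of a $p\times p$ circulant. The eigenvalues of the circulant with first column $(y^{(p)}_{i,0},\dots,y^{(p)}_{i,p-1})$ are $\sum_{j}\zeta_p^{kj}y^{(p)}_{i,j}$. Substituting the definition of $y^{(p)}_{i,j}$ and using $\zeta_p=\zeta_n^{n_p}$, each eigenvalue equals $\sum_{u=0}^{n-1}\zeta_n^{(i+n_pk)u}x_u=y_{i+n_pk}$. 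I would verify this exponent bookkeeping, i.e.\ the matching of $\zeta_n^{i(j+pl)}\zeta_n^{n_pkj}$ with $\zeta_n^{(i+n_pk)(j+pl)}$, using $n_pkpl\equiv0 \pmod n$. This index check is the main technical point, and it also fixes the convention that $\bm e_j$ corresponds to $y_j$.

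\textbf{Conclusion.} Apply the monomial map $\bm\gamma\mapsto\bm y^{\bm\gamma}$, which is multiplicative, to the identity of nonnegative vectors above. This gives $\bm y^{\bm\alpha}\prod\Theta_p(\bm y_i^{(p)})^{c_{p,i}^-}=\prod\Theta_p(\bm y_i^{(p)})^{c_{p,i}^+}$. Since $\mathbb C[\bm x]$ is a domain and each $\Theta_p(\bm y_i^{(p)})$ is a nonzero product of the algebraically independent linear forms $y_j$, we can divide and obtain the displayed formula.

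For Theorem~\ref{thm:5}, Theorem~\ref{thm:1} shows that $\Ker D_1$ is spanned by the $\bm y^{\bm\alpha}$ with $\bm\alpha\in M_n$, so $\Ker D_1\subset\Frac(R_n)$. Conversely, $R_n\subset\Ker D_1$ because each $\bm v_i^{(p)}\in M_n$. Hence $\Frac(\Ker D_1)=\Frac(R_n)$.
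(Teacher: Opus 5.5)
Your proposal is correct and follows essentially the paper's route. You obtain $\bm\alpha$ as an integer combination of the $\bm v_i^{(p)}$ from the R\'edei--de Bruijn--Schoenberg theorem (the paper's Corollary~14), then turn this into the monomial identity using the factorization $\Theta_p(\bm y_i^{(p)})=\bm y^{\bm v_i^{(p)}}$ of \cite[Lemma~13]{YamaguchiOchiaiYamaguchi2025}. Your other departures are harmless: you check that factorization directly through the circulant eigenvalues, and you derive Theorem~\ref{thm:5} from the corollary instead of the reverse, which only needs $M_n\subset L_n$ and $T_n\subset M_n$.
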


Moreover, analogues of the results from Theorem~\ref{thm:1}
to Corollary~\ref{cor:6} hold for the derivation $\Delta_g$~(see Section~\ref{sec:6}).

\section{Proof of Theorem~\ref{thm:1}}

We apply the Fourier diagonalization for the cyclic
case~\cite[Propositions~2.1 and~2.2]{MoulinOllagnierNowicki2013},
\cite[Section~2.1]{YamaguchiOchiaiYamaguchi2025}
to the variables associated with each right coset $Ht_i$.
For a finite group $G$ and an element $g\in G$, put
\[
H:=\langle g\rangle,\quad m:=|H|,\quad r:=[G:H],\quad
G=\bigsqcup_{i=1}^{r}Ht_i.
\]
We replace the coordinates $\bm x$ by new coordinates $\bm y$ using
the discrete Fourier transform:
\begin{gather*}
\bm y:=\{y_{i,j}\mid1\leq i\leq r,\ 0\leq j\leq m-1\},\\
y_{i,j}:=\sum_{k=0}^{m-1}\zeta_m^{jk}x_{g^kt_i},
\qquad\zeta_m:=\exp\!\left(\frac{2\pi\sqrt{-1}}{m}\right).
\end{gather*}
The inverse transform is
\[
x_{g^kt_i}=\frac1m\sum_{j=0}^{m-1}\zeta_m^{-jk}y_{i,j}
\qquad(1\leq i\leq r,\ 0\leq k\leq m-1),
\]
so $\mathbb C[\bm x]=\mathbb C[\bm y]$.
We then have the following proposition.

\begin{prop}\label{prop:2.1}
For each variable $y_{i,j}$, we have
\[
D_g(y_{i,j})=\zeta_m^jy_{i,j}.
\]
Hence, in the new coordinates, the Frobenius polarization operator is
\[
D_g=\sum_{i=1}^{r}\sum_{j=0}^{m-1}
\zeta_m^jy_{i,j}\frac{\partial}{\partial y_{i,j}}.
\]
\end{prop}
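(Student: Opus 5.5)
We compute $D_g(y_{i,j})$ directly from the action of $D_g$ on the variables, using only linearity and the identity $D_g(x_h)=x_{g^{-1}h}$. The second statement then follows because $D_g$ is a derivation.

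First, by linearity,
\[
D_g(y_{i,j})=\sum_{k=0}^{m-1}\zeta_m^{jk}D_g(x_{g^kt_i})
=\sum_{k=0}^{m-1}\zeta_m^{jk}x_{g^{k-1}t_i}.
\]
Here $g^{-1}g^kt_i=g^{k-1}t_i$ stays in the coset $Ht_i$, so the operator does not mix different cosets.

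Next, we reindex with $k'=k-1$, read modulo $m$. This is legitimate because $g^m=e$, so $g^{-1}t_i=g^{m-1}t_i$, and because $\zeta_m^{j(k'+1)}$ depends only on $k'$ modulo $m$. The sum becomes
\[
\sum_{k'=0}^{m-1}\zeta_m^{j(k'+1)}x_{g^{k'}t_i}=\zeta_m^j y_{i,j}.
\]
The only point requiring care is this cyclic reindexing, in particular the wrap-around term $k=0$. It is handled exactly by $g^m=e$ and $\zeta_m^m=1$. We must also note that the elements $g^kt_i$ for $0\le k\le m-1$ are distinct, so each $y_{i,j}$ is well defined.

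Finally, since $\mathbb C[\bm x]=\mathbb C[\bm y]$ via the invertible linear change of coordinates, the $y_{i,j}$ are algebraically independent generators. A derivation of a polynomial ring is determined by its values on the generators, and it equals $\sum_{i,j} D_g(y_{i,j})\,\partial/\partial y_{i,j}$ by the chain rule. Substituting $D_g(y_{i,j})=\zeta_m^jy_{i,j}$ gives the stated diagonal form of $D_g$.
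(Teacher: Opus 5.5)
Your proof is correct and follows the paper's argument: you apply $D_g$ termwise to get $\sum_k \zeta_m^{jk}x_{g^{k-1}t_i}$, reindex cyclically using $g^m=e$ and $\zeta_m^m=1$, and pull out $\zeta_m^j$. You also spell out why a derivation of $\mathbb C[\bm y]$ is determined by its values on the $y_{i,j}$, which the paper's ``Hence'' leaves implicit.
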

\begin{proof}
By the definition of the Frobenius polarization operator,
\begin{align*}
D_g(y_{i,j})
&=\sum_{k=0}^{m-1}\zeta_m^{jk}x_{g^{k-1}t_i}\\
&=\zeta_m^j\sum_{k=0}^{m-1}\zeta_m^{jk}x_{g^kt_i}
=\zeta_m^jy_{i,j}.\qedhere
\end{align*}
\end{proof}

Define the additive semigroup $M_{m,r}$ and the $\mathbb C$-algebra
$\mathbb C[M_{m,r}]$ by
\begin{gather*}
M_{m,r}:=\left\{\bm\alpha=(\alpha_{i,j})\in\mathbb Z_{\geq0}^{|G|}
\ \middle|\ \sum_{i=1}^{r}\sum_{j=0}^{m-1}\alpha_{i,j}\zeta_m^j=0\right\},\\
\mathbb C[M_{m,r}]:=\mathbb C[\bm y^{\bm\alpha}\mid\bm\alpha\in M_{m,r}].
\end{gather*}

\begin{proof}[Proof of Theorem~\ref{thm:1}]
For an exponent vector $\bm\alpha=(\alpha_{i,j})\in
\mathbb Z_{\geq0}^{rm}=\mathbb Z_{\geq0}^{|G|}$, define
\[
\bm y^{\bm\alpha}:=\prod_{i=1}^{r}\prod_{j=0}^{m-1}y_{i,j}^{\alpha_{i,j}}.
\]
Proposition~\ref{prop:2.1} and the Leibniz rule give
\[
D_g(\bm y^{\bm\alpha})
=\left(\sum_{i=1}^{r}\sum_{j=0}^{m-1}\alpha_{i,j}\zeta_m^j\right)
\bm y^{\bm\alpha}.
\]
Thus, for a monomial $\bm y^{\bm\alpha}$,
\[
\bm y^{\bm\alpha}\in\Ker D_g
\iff D_g\bm y^{\bm\alpha}=0
\iff\bm\alpha\in M_{m,r}.
\]
Distinct monomials are linearly independent over $\mathbb C$, and
$D_g$ maps each monomial to a scalar multiple of itself. Therefore,
\[
\Ker D_g
=\bigoplus_{\bm\alpha\in M_{m,r}}\mathbb C\bm y^{\bm\alpha}
=\mathbb C[M_{m,r}].\qedhere
\]
\end{proof}

\section{Proof of Theorem~\ref{thm:2}}

We use Dickson's lemma to obtain a generating set of $M_{m,r}$.

\begin{lem}[Dickson's lemma~\cite{MR1506194,MR4952933,MR3838370}]
Equip $\mathbb Z_{\geq0}^n$ with the partial order $\leq$ defined by
\[
(a_1,a_2,\ldots,a_n)\leq(b_1,b_2,\ldots,b_n)
\stackrel{\mathrm{def.}}{\iff}
a_i\leq b_i\quad(i=1,2,\ldots,n).
\]
Then every subset $S\subset\mathbb Z_{\geq0}^n$ has only finitely many
minimal elements with respect to this partial order.
\end{lem}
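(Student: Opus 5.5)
The statement just above is Dickson's lemma, which the paper quotes from the literature. Still, here is how I would prove it directly. The plan is to induct on $n$ and establish a stronger claim. The claim is that every infinite sequence $\bm a^{(1)},\bm a^{(2)},\ldots$ in $\mathbb Z_{\geq0}^n$ has an infinite subsequence that is nondecreasing in every coordinate. The statement then follows: the minimal elements of any subset $S$ form an antichain, that is, pairwise incomparable elements. An infinite antichain would be a sequence of distinct pairwise incomparable elements, and by the claim it would contain two indices $k<l$ with $\bm a^{(k)}\leq\bm a^{(l)}$, a contradiction.

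\emph{Base case $n=1$.} Any sequence of nonnegative integers has a nondecreasing infinite subsequence. To see this, let $v$ be the minimum value attained infinitely often, if some value repeats infinitely often; the constant subsequence at $v$ works. Otherwise every value occurs only finitely many times. The values are then unbounded, and one greedily picks later terms that are at least as large as the previous chosen term. In either case we get the subsequence; equivalently, this is just well-ordering of $\mathbb Z_{\geq0}$.

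\emph{Inductive step.} Given a sequence in $\mathbb Z_{\geq0}^n$, first apply the case $n=1$ to the first coordinate to extract a subsequence whose first coordinates are nondecreasing. Then apply the induction hypothesis to the remaining $n-1$ coordinates of that subsequence, obtaining a further subsequence. Passing to a subsequence preserves monotonicity of the first coordinate, so the result is nondecreasing in all coordinates.

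The only point requiring any care is the base case, specifically making sure a nondecreasing subsequence exists rather than merely a bounded one. I would handle it by the split above: either some value occurs infinitely often, or the sequence tends to infinity. Everything else is routine bookkeeping with subsequences.
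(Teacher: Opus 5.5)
The paper gives no proof of this lemma; it only cites Dickson and textbook sources, so there is no argument in the paper to match yours against.

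Your proof is correct, and it is the standard self-contained one. You strengthen the statement to ``every sequence in $\mathbb Z_{\geq0}^n$ has a coordinatewise nondecreasing subsequence'' (that is, $\leq$ is a well-quasi-order). You prove this by extracting subsequences one coordinate at a time. You then conclude because distinct minimal elements of $S$ are pairwise incomparable. The base-case split is handled correctly: either some value recurs infinitely often, or every value occurs only finitely often and the sequence tends to infinity.

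The stronger formulation also rules out infinite strictly decreasing chains. That covers the other fact used in the proof of Lemma~\ref{lem:3.2}, namely that every $\bm a\in M\setminus\{\bm0\}$ lies above some minimal element. The paper takes that fact for granted; it also follows from the finiteness of $\{\bm b\in\mathbb Z_{\geq0}^n\mid\bm b\leq\bm a\}$.

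A common alternative is to induct directly on antichains. Fix one minimal element $\bm b$. Every other minimal element $\bm a$ has $a_i<b_i$ for some $i$, so it lies in one of finitely many slices $\{a_i=c\}$ with $c<b_i$. Within each slice, the minimal elements form an antichain in a copy of $\mathbb Z_{\geq0}^{n-1}$. Your route avoids this case analysis, at the price of proving a slightly stronger statement.
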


\begin{lem}\label{lem:3.2}
Let $\lambda_1,\lambda_2,\ldots,\lambda_n$ be elements of an abelian
group $\Lambda$, and consider the additive semigroup
\[
M:=\{\bm a=(a_1,a_2,\ldots,a_n)\in\mathbb Z_{\geq0}^n
\mid a_1\lambda_1+a_2\lambda_2+\cdots+a_n\lambda_n=0\}.
\]
If $\bm h_1,\bm h_2,\ldots,\bm h_s$ are all the minimal elements
of $M\setminus\{\bm0\}$, then
\[
M=\mathbb Z_{\geq0}\bm h_1+\mathbb Z_{\geq0}\bm h_2+
\cdots+\mathbb Z_{\geq0}\bm h_s.
\]
\end{lem}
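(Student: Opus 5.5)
The inclusion $\mathbb Z_{\geq0}\bm h_1+\cdots+\mathbb Z_{\geq0}\bm h_s\subset M$ is immediate. $M$ is closed under addition and contains each $\bm h_k$, since the defining condition $\sum a_i\lambda_i=0$ is additive in $\bm a$. Also $\bm 0\in M$ is the empty sum. For the reverse inclusion I will argue by induction on the total degree $|\bm a|:=a_1+\cdots+a_n$ of an element $\bm a\in M$. The induction is well founded because $|\bm a|$ is a nonnegative integer and strictly decreases along the reductions described below.

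For the inductive step, take $\bm a\in M\setminus\{\bm 0\}$ and consider the set $S:=\{\bm b\in M\setminus\{\bm0\}\mid \bm b\leq\bm a\}$. This set is finite and nonempty, since it contains $\bm a$. It therefore has a minimal element $\bm b$, and one only needs finiteness of the down-set here, not Dickson's lemma. I claim $\bm b$ is minimal in all of $M\setminus\{\bm0\}$. Indeed, if $\bm c\in M\setminus\{\bm0\}$ satisfies $\bm c\leq\bm b$, then $\bm c\leq\bm a$, so $\bm c\in S$, and minimality of $\bm b$ in $S$ forces $\bm c=\bm b$. Hence $\bm b=\bm h_k$ for some $k$.

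The key point is that $\bm a-\bm h_k$ lies in $M$. Its entries are nonnegative because $\bm h_k\leq\bm a$ componentwise. Its weighted sum is
\[
\sum_i(a_i-h_{k,i})\lambda_i=0-0=0,
\]
using that $\Lambda$ is an abelian group, so subtraction is available there. Moreover $|\bm a-\bm h_k|<|\bm a|$ because $\bm h_k\neq\bm0$. By the induction hypothesis, $\bm a-\bm h_k$ is a $\mathbb Z_{\geq0}$-combination of the $\bm h_l$, and adding $\bm h_k$ gives the same for $\bm a$.

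I expect no real obstacle. The one point needing care is this closure of $M$ under differences that stay nonnegative, which relies on $\Lambda$ being a group rather than merely a monoid. The finiteness $s<\infty$ implicit in the statement is supplied by Dickson's lemma, but it is not needed for the spanning argument itself.
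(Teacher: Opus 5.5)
Your proof is correct and is essentially the paper's argument: subtract a minimal element $\bm h_k\leq\bm a$ and recurse, with termination guaranteed by the strictly decreasing coordinate sum. Your finite down-set argument also supplies a justification, which the paper omits, that such an $\bm h_k$ exists.

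One small correction to your closing remark: $\bm a-\bm h_k\in M$ does not actually need inverses in $\Lambda$. The identity $\sum_i(a_i-h_{k,i})\lambda_i=\sum_i(a_i-h_{k,i})\lambda_i+\sum_i h_{k,i}\lambda_i=\sum_i a_i\lambda_i=0$ holds in any commutative monoid.
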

\begin{proof}
By Dickson's lemma, the set $M\setminus\{\bm0\}$ has finitely many
minimal elements; denote them by $\bm h_1,\bm h_2,\ldots,\bm h_s$.
For any $\bm a\in M\setminus\{\bm0\}$, there exists a minimal element
$\bm h_i$ with $\bm h_i\leq\bm a$. Then $\bm a-\bm h_i\in M$.
If $\bm a-\bm h_i=\bm0$, then $\bm a=\bm h_i$, and hence $\bm a$
is a nonnegative integer linear combination of the minimal elements.
If $\bm a-\bm h_i\in M\setminus\{\bm0\}$, there exists a minimal
element $\bm h_j$ with $\bm h_j\leq\bm a-\bm h_i$.
Then $\bm a-\bm h_i-\bm h_j\in M$.
At each step, the sum of the coordinates of the remaining vector
strictly decreases, so the process reaches $\bm0$ after finitely many
steps. Thus $\bm a$ is a nonnegative integer linear combination of the
minimal elements.
\end{proof}

\begin{lem}
For a nonzero element $\bm h$ of the additive semigroup $M$ in
Lemma~\ref{lem:3.2}, being a minimal element of $M\setminus\{\bm0\}$
is equivalent to being an irreducible element of $M$.
\end{lem}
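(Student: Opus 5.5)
The plan is to first fix the meaning of irreducibility in $M$. A nonzero $\bm h\in M$ is \emph{irreducible} if there is no decomposition $\bm h=\bm a+\bm b$ with $\bm a,\bm b\in M\setminus\{\bm0\}$. Since $M\subset\mathbb Z_{\geq0}^n$, the only unit of $M$ is $\bm0$, so this is the standard notion. The key observation behind both directions is that $M$ is closed under subtraction whenever the difference stays in $\mathbb Z_{\geq0}^n$. Indeed, if $\bm a,\bm b\in M$ and $\bm a\leq\bm b$, then $\bm b-\bm a\in\mathbb Z_{\geq0}^n$ and
\[
\sum_k(b_k-a_k)\lambda_k=\sum_k b_k\lambda_k-\sum_k a_k\lambda_k=0
\]
in the abelian group $\Lambda$, so $\bm b-\bm a\in M$. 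This is the same fact already used in the proof of Lemma~\ref{lem:3.2}.

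For the direction minimal $\Rightarrow$ irreducible, let $\bm h$ be minimal in $M\setminus\{\bm0\}$ and suppose $\bm h=\bm a+\bm b$ with $\bm a,\bm b\in M\setminus\{\bm0\}$. Because $\bm b\geq\bm0$, we get $\bm a\leq\bm h$. Because $\bm b\neq\bm0$, some coordinate of $\bm b$ is positive, so $\bm a\neq\bm h$. This contradicts the minimality of $\bm h$.

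For the direction irreducible $\Rightarrow$ minimal, let $\bm h\in M\setminus\{\bm0\}$ be irreducible and suppose it is not minimal. Then there is $\bm a\in M\setminus\{\bm0\}$ with $\bm a\leq\bm h$ and $\bm a\neq\bm h$. By the observation above, $\bm b:=\bm h-\bm a\in M$, and $\bm b\neq\bm0$ since $\bm a\neq\bm h$. Hence $\bm h=\bm a+\bm b$ with both summands nonzero elements of $M$, contradicting irreducibility.

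No step is a real obstacle; the argument is elementary. The one point needing care is the closure of $M$ under subtraction of a smaller element. This is where the defining condition of $M$ as the kernel of a homomorphism $\mathbb Z_{\geq0}^n\to\Lambda$, restricted to nonnegative vectors, is genuinely used. It would fail for a general submonoid of $\mathbb Z_{\geq0}^n$: for example, in $\{0,2,3,4,\dots\}$ the element $5=2+3$ is not irreducible, yet $5$ is not minimal in the nonzero part either, since $2\leq5$. There irreducible elements such as $3$ need not be minimal under the componentwise order, because $2\leq3$ but $3-2=1\notin M$.
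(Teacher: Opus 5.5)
Your proof is correct and follows essentially the same argument as the paper in both directions. Your explicit check that $\bm h-\bm a\in M$ via the defining linear relation in $\Lambda$ spells out the one step the paper leaves implicit.
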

\begin{proof}
Suppose that a minimal element $\bm h$ is not irreducible, so that
$\bm h=\bm a+\bm b$ for some $\bm a,\bm b\in M\setminus\{\bm0\}$.
Then $\bm0<\bm a<\bm h$, contradicting the minimality of $\bm h$.
Thus every minimal element is irreducible.
Conversely, if $\bm h$ is not a minimal element of $M\setminus\{\bm0\}$,
then there exists $\bm b\in M\setminus\{\bm0\}$ with $\bm b<\bm h$.
In this case, $\bm h-\bm b\in M\setminus\{\bm0\}$, and the decomposition
$\bm h=\bm b+(\bm h-\bm b)$ shows that $\bm h$ is not irreducible.
\end{proof}

For generating sets and the minimum number of generators in the case
$r=1$, see Moulin Ollagnier--Nowicki~\cite[Propositions~2.3 and~3.1]
{MoulinOllagnierNowicki2013}.
For arbitrary $r$, we prove below that the monomials corresponding to
the minimal nonzero exponent vectors generate the kernel, and use the
quotient space $I/I^2$ to establish the minimality of their number.

\begin{proof}[Proof of Theorem~\ref{thm:2}]
Applying Lemma~\ref{lem:3.2} with the abelian group
$\Lambda=(\mathbb C,+)$ and $\lambda_{i,j}=\zeta_m^j$, we obtain
minimal elements $\bm\beta_1,\bm\beta_2,\ldots,\bm\beta_s$ of
$M_{m,r}\setminus\{\bm0\}$ such that
\[
M_{m,r}=\mathbb Z_{\geq0}\bm\beta_1+\mathbb Z_{\geq0}\bm\beta_2+
\cdots+\mathbb Z_{\geq0}\bm\beta_s.
\]
Hence Theorem~\ref{thm:1} gives
\begin{align*}
\Ker D_g
&=\bigoplus_{\bm\alpha\in M_{m,r}}\mathbb C\bm y^{\bm\alpha}\\
&=\mathbb C[\bm y^{\bm\beta_1},\bm y^{\bm\beta_2},\ldots,
\bm y^{\bm\beta_s}].
\end{align*}
Next, we prove that this generating set is minimal and that its
cardinality $s$ is the minimum possible.
Put $A:=\Ker D_g=\mathbb C[M_{m,r}]$, and define an ideal $I$ of $A$ by
\[
I:=\bigoplus_{\bm\alpha\in M_{m,r}\setminus\{\bm0\}}
\mathbb C\bm y^{\bm\alpha}.
\]
A nonzero element of the additive semigroup $M_{m,r}$ is irreducible
if and only if it is a minimal element of $M_{m,r}\setminus\{\bm0\}$.
Consequently, the ideal $I^2$ is spanned as a $\mathbb C$-vector space
by the monomials whose exponent vectors can be written as sums of two
nonzero elements of $M_{m,r}$.
It follows that the quotient space $I/I^2$ has the $\mathbb C$-basis
\[
\bm y^{\bm\beta_1}+I^2,\bm y^{\bm\beta_2}+I^2,\ldots,
\bm y^{\bm\beta_s}+I^2,
\]
and hence
\[
\dim_{\mathbb C}I/I^2=s.
\]
Now suppose that $A=\mathbb C[f_1,\ldots,f_t]$.
Subtracting the constant term from each $f_i$, we may assume that
$f_i\in I$.
Since every product of two or more of the $f_i$ is zero in $I/I^2$,
the residue classes
\[
f_1+I^2,f_2+I^2,\ldots,f_t+I^2
\]
span $I/I^2$ as a $\mathbb C$-vector space. Thus
\[
t\geq\dim_{\mathbb C}I/I^2=s.
\]
Therefore, the $s$ monomials above form a minimal generating set,
and every generating set has at least $s$ elements.
\end{proof}

\section{Proof of Theorem~\ref{thm:3}}

\begin{lem}\label{lem:4.1}
The set of minimal elements of $M_{m,r}\setminus\{\bm0\}$ is
\[
\pi^{-1}(\Hilb(M_m)).
\]
\end{lem}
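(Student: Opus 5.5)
The plan is to show that $\pi$ is an order-preserving map that sends $M_{m,r}$ onto $M_m$ and detects zero. We then compare minimality upstairs and downstairs in both directions. First note that for $\bm\alpha\in\mathbb Z_{\geq0}^{rm}$ we have
\[
\sum_{i,j}\alpha_{i,j}\zeta_m^j=\sum_{j}\pi(\bm\alpha)_j\zeta_m^j .
\]
Hence $\bm\alpha\in M_{m,r}$ if and only if $\pi(\bm\alpha)\in M_m$, where $\pi$ is extended to all of $\mathbb Z_{\geq0}^{rm}$ by the same formula. Moreover, $\pi$ is additive and monotone for the componentwise order, and $\pi(\bm\alpha)=\bm0$ if and only if $\bm\alpha=\bm0$, because all entries are nonnegative.

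\emph{Inclusion $\pi^{-1}(\Hilb(M_m))\subset$ minimal elements.} Let $\pi(\bm\alpha)\in\Hilb(M_m)$, so $\bm\alpha\neq\bm0$ and $\bm\alpha\in M_{m,r}$. Suppose $\bm0\neq\bm\gamma\leq\bm\alpha$ with $\bm\gamma\in M_{m,r}$. Then $\bm0\neq\pi(\bm\gamma)\leq\pi(\bm\alpha)$ and $\pi(\bm\gamma)\in M_m$, so minimality downstairs gives $\pi(\bm\gamma)=\pi(\bm\alpha)$. The difference $\bm\alpha-\bm\gamma\geq\bm0$ then has $\pi(\bm\alpha-\bm\gamma)=\bm0$, so $\bm\gamma=\bm\alpha$. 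Hence $\bm\alpha$ is minimal.

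\emph{Reverse inclusion.} Let $\bm\alpha$ be minimal in $M_{m,r}\setminus\{\bm0\}$, and put $\bm c=\pi(\bm\alpha)\in M_m\setminus\{\bm0\}$. Suppose $\bm c$ is not minimal, so there is $\bm d\in M_m$ with $\bm0\neq\bm d\leq\bm c$ and $\bm d\neq\bm c$. The key step is lifting: we construct $\bm\gamma\leq\bm\alpha$ with $\pi(\bm\gamma)=\bm d$. To do this, for each $j$ distribute $d_j\leq c_j=\sum_i\alpha_{i,j}$ greedily among the entries $\alpha_{1,j},\alpha_{2,j},\ldots$, taking $\gamma_{i,j}=\min\bigl(\alpha_{i,j},\,d_j-\sum_{i'<i}\gamma_{i',j}\bigr)$. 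Then $\bm\gamma\in M_{m,r}$ by the first observation, $\bm\gamma\neq\bm0$, and $\bm\gamma\neq\bm\alpha$ since their images differ. This contradicts the minimality of $\bm\alpha$, so $\bm c\in\Hilb(M_m)$. The only real content is this lifting step; it is elementary, but it is where the coset multiplicity $r$ is handled.

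The cardinality formula in Theorem~\ref{thm:3} will then follow by counting, for each $\bm c\in\Hilb(M_m)$, the fibre $\pi^{-1}(\bm c)$. This fibre consists of independent choices, one for each $j$, of a composition of $c_j$ into $r$ nonnegative parts. There are $\binom{c_j+r-1}{r-1}$ such compositions for each $j$, and the fibres over distinct $\bm c$ are disjoint.
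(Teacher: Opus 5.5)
Your proposal is correct and follows essentially the same route as the paper: both directions are the same contradiction arguments, namely lifting a smaller $\bm d\in M_m$ coordinatewise to some $\bm\gamma\leq\bm\alpha$, and pushing a smaller $\bm\gamma\in M_{m,r}$ down via $\pi$. The only difference is that you spell out a greedy choice of the $\gamma_{i,j}$ and the fact that $\pi$ preserves strict inequality, where the paper simply asserts that such $\gamma_{i,j}$ exist.
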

\begin{proof}
We first show by contradiction that every minimal element of
$M_{m,r}\setminus\{\bm0\}$ belongs to $\pi^{-1}(\Hilb(M_m))$.
Let $\bm\alpha$ be a minimal element of $M_{m,r}\setminus\{\bm0\}$,
and suppose that $\bm c:=\pi(\bm\alpha)$ is not a minimal element of
$M_m\setminus\{\bm0\}$.
Then there exists $\bm d\in M_m$ such that $\bm0\neq\bm d<\bm c$.
For each $j$, we have $d_j\leq c_j=\sum_{i=1}^{r}\alpha_{i,j}$, so
we can choose nonnegative integers $\gamma_{i,j}$ satisfying
\[
0\leq\gamma_{i,j}\leq\alpha_{i,j},\qquad
\sum_{i=1}^{r}\gamma_{i,j}=d_j.
\]
The vector $\bm\gamma=(\gamma_{i,j})$ then satisfies
$\bm0\neq\bm\gamma<\bm\alpha$ and
\[
\sum_{j=0}^{m-1}\sum_{i=1}^{r}\gamma_{i,j}\zeta_m^j
=\sum_{j=0}^{m-1}d_j\zeta_m^j=0.
\]
Thus $\bm\gamma\in M_{m,r}$, contradicting the minimality of
$\bm\alpha$. Hence $\bm c\in\Hilb(M_m)$.

Conversely, we show by contradiction that every element of
$\pi^{-1}(\Hilb(M_m))$ is a minimal element of
$M_{m,r}\setminus\{\bm0\}$.
Let $\bm c\in\Hilb(M_m)$ and $\bm\alpha\in\pi^{-1}(\bm c)$.
Suppose that there exists $\bm\gamma\in M_{m,r}$ such that
$\bm0\neq\bm\gamma<\bm\alpha$.
Then $\bm0\neq\pi(\bm\gamma)<\bm c$, contradicting the minimality
of $\bm c$.
Therefore, the vector $\bm\alpha$ is a minimal element of
$M_{m,r}\setminus\{\bm0\}$.
\end{proof}

\begin{proof}[Proof of Theorem~\ref{thm:3}]
The first assertion follows from Theorem~\ref{thm:2} and
Lemma~\ref{lem:4.1}.
Fix an element $\bm c=(c_0,c_1,\ldots,c_{m-1})\in\Hilb(M_m)$.
The condition $\pi(\bm\alpha)=\bm c$ for $\bm\alpha=(\alpha_{i,j})$
is equivalent to
\[
\alpha_{1,j}+\alpha_{2,j}+\cdots+\alpha_{r,j}=c_j,
\qquad\alpha_{i,j}\in\mathbb Z_{\geq0}
\]
for each $j$. The number of nonnegative integer solutions is
\[
\binom{c_j+r-1}{r-1}.
\]
Since the choices for distinct values of $j$ are independent, we obtain
\[
|\pi^{-1}(\bm c)|=\prod_{j=0}^{m-1}\binom{c_j+r-1}{r-1}.
\]
Summing over $\bm c\in\Hilb(M_m)$ proves the assertion.
\end{proof}

\section{Proof of Theorem~\ref{thm:5}}
\label{sec:cyclic}

In this section, we prove Theorem~\ref{thm:5} as a consequence of known
results. It follows by combining the results on fields of fractions
and integer zero-sum relations in
Moulin Ollagnier--Nowicki~\cite[Propositions~2.4 and~2.5,
Theorem~3.8]{MoulinOllagnierNowicki2013} with the circulant determinant
representation of Yamaguchi--Ochiai--Yamaguchi~\cite[Lemma~13]
{YamaguchiOchiaiYamaguchi2025}.

We use the group-ring formulation of the R\'edei--de Bruijn--Schoenberg
theorem~\cite[Theorem~2.2]{MR1736695} to show that the exponent vectors
of the monomials generating the kernel and those of the monomials
corresponding to the circulant determinants generate the same additive
group. From this, we directly deduce that the field of fractions of the
kernel coincides with that of the subalgebra generated by these circulant
determinants.

For each prime divisor $p$ of $n$, put $n_p:=n/p$.
The following result is due to R\'edei~\cite[Hilfssatz~4]{MR45729}
\cite{MR62760}, de Bruijn~\cite[Theorem~1]{MR59271}, and
Schoenberg~\cite[Theorem~1]{MR170887}.
Lam--Leung~\cite[Theorem~2.2]{MR1736695} give the following
formulation in terms of group rings.

\begin{thm}[R\'edei--de Bruijn--Schoenberg]\label{thm:13}
Let $G=\langle g\rangle$ be a cyclic group of order $n$, with prime
factorization $n=p_1^{a_1}\cdots p_s^{a_s}$.
Define a ring homomorphism from the group ring $\mathbb ZG$ to
$\mathbb Z[\zeta_n]$ by
\[
\phi(g):=\zeta_n.
\]
For each prime divisor $p$ of $n$, let
\[
P_p:=\langle g^{n_p}\rangle
\]
be the subgroup of $G$ of order $p$, and put
\[
\sigma(P_p):=\sum_{h\in P_p}h
=1+g^{n_p}+\cdots+g^{(p-1)n_p}.
\]
Then
\[
\Ker\phi=\sum_{\substack{p\mid n\\p:\,\text{prime}}}
\mathbb ZG\,\sigma(P_p).
\]
\end{thm}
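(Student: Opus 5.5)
The inclusion $\sum_p\mathbb ZG\,\sigma(P_p)\subset\Ker\phi$ is immediate. Since $\phi(g^{n_p})=\zeta_n^{n_p}=\zeta_p$, we get $\phi(\sigma(P_p))=1+\zeta_p+\cdots+\zeta_p^{p-1}=0$, and $\Ker\phi$ is an ideal. For the reverse inclusion I would pass to polynomials. Identify $\mathbb ZG\cong\mathbb Z[x]/(x^n-1)$ via $g\mapsto x$. Then $\phi$ becomes evaluation at $\zeta_n$, whose kernel in $\mathbb Z[x]$ is $\Phi_n(x)\mathbb Z[x]$, because $\Phi_n$ is monic and irreducible. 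Moreover, $\sigma(P_p)$ corresponds to
\[
F_p(x):=\frac{x^n-1}{x^{n_p}-1}=\Phi_p(x^{n_p}).
\]
Let $J\subset\mathbb Z[x]$ be the ideal generated by the $F_p$. Then $x^n-1=F_p(x)(x^{n_p}-1)\in J$, so it suffices to prove $\Phi_n\in J$.

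I will prove $\Phi_n\in J$ by induction on the number $s$ of prime divisors of $n$, and in fact for every $n\geq2$. If $n=p^a$, then $F_p=\Phi_p(x^{p^{a-1}})=\Phi_{p^a}=\Phi_n$, and we are done. For the inductive step, write $n=p^am$ with $p\nmid m$ and $m>1$. Apply the induction hypothesis to $m$, and then substitute $x\mapsto x^{p^a}$. Since
\[
\frac{x^{m}-1}{x^{m/q}-1}\Big|_{x\mapsto x^{p^a}}=\frac{x^n-1}{x^{n/q}-1}=F_q\qquad(q\mid m),
\]
this gives $\Phi_m(x^{p^a})\in J$. Now I use the standard identities
\[
\Phi_m(x^{p^a})=\prod_{j=0}^{a}\Phi_{p^jm}(x)=\Phi_n\cdot P,\qquad P:=\prod_{j=0}^{a-1}\Phi_{p^jm},
\]
and
\[
F_p=\Phi_p(x^{n/p})=\prod_{d\mid m}\Phi_{p^ad}=\Phi_n\cdot Q,\qquad Q:=\prod_{d\mid m,\,d<m}\Phi_{p^ad}.
\]
Hence both $\Phi_nP$ and $\Phi_nQ$ lie in $J$.

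The main step, and the one where the integrality really matters, is to show that $P$ and $Q$ generate the unit ideal in $\mathbb Z[x]$, i.e.\ $uP+vQ=1$ for some $u,v\in\mathbb Z[x]$. Granting this, $\Phi_n=\Phi_n(uP+vQ)\in J$, which completes the induction.

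To prove comaximality, I would use the known fact that for $a>b\geq1$ the resultant $\mathrm{Res}(\Phi_a,\Phi_b)$ equals $\pm1$ unless $a/b$ is a prime power. I would then check that every pair of factors $\Phi_{p^jm}$ ($j<a$) and $\Phi_{p^ad}$ ($d\mid m$, $d<m$) has an index ratio that is not a prime power. This holds because the ratio involves both $p^{a-j}$ with $a-j\geq1$ and $(m/d)^{\pm1}$ with $m/d>1$ coprime to $p$.

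It remains to pass from resultant $\pm1$ to the unit ideal, and this needs care. I would argue that for monic $f,g\in\mathbb Z[x]$ the ring $\mathbb Z[x]/(f,g)$ is a finitely generated abelian group whose order divides $|\mathrm{Res}(f,g)|$; equivalently, $\mathrm{Res}(f,g)\in(f,g)$. Hence resultant $\pm1$ gives $(f,g)=\mathbb Z[x]$. Multiplicativity then extends this to the products $P$ and $Q$: if each factor of $P$ is comaximal with each factor of $Q$, then $P$ and $Q$ are comaximal.
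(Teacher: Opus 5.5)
Your proof is correct. It necessarily takes a different route from the paper, which gives no proof of this theorem and simply quotes it from Lam--Leung (and the original papers of R\'edei, de Bruijn and Schoenberg).

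Your argument is an explicit ideal-theoretic induction in $\mathbb Z[x]$, and each step holds: the reduction to $\Phi_n\in J$, the substitution $x\mapsto x^{p^a}$ sending the generators for $m$ to those for $n$, the factorizations $\Phi_m(x^{p^a})=\Phi_nP$ and $F_p=\Phi_nQ$, and the index check. In fact neither of $p^jm$ and $p^ad$ divides the other.

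The payoff is constructive: unwinding the induction produces explicit $u_p\in\mathbb Z[x]$ with $\Phi_n=\sum_pu_pF_p$. The cost is the appeal to Apostol's evaluation of cyclotomic resultants, which you can avoid. You only need $(\Phi_A,\Phi_B)=\mathbb Z[x]$ when neither of $A,B$ divides the other. A maximal ideal containing both has finite residue field of some characteristic $\ell$. Every root of $\Phi_A$ in characteristic $\ell$ has multiplicative order equal to the prime-to-$\ell$ part of $A$, so the image of $x$, being a common root, would force one of $A,B$ to divide the other. (Your two formulations of the resultant fact are not literally equivalent, but both are true and either suffices.)

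For comparison, the group-ring statement also has a short non-inductive proof by counting ranks. Write $G$ as the product of its Sylow subgroups $G_i$. Then $\mathbb ZG\cong\bigotimes_i\mathbb ZG_i$ with $\sigma(P_{p_i})\in\mathbb ZG_i$, so $\mathbb ZG/\sum_p\mathbb ZG\,\sigma(P_p)\cong\bigotimes_i\mathbb Z[x]/(\Phi_{p_i^{a_i}})$. This is free of rank $\prod_i\varphi(p_i^{a_i})=\varphi(n)$ (Euler's function) and surjects onto $\mathbb Z[\zeta_n]$, which is free of the same rank, so the surjection is injective. That avoids resultants and induction altogether, at the price of giving no explicit expression for $\Phi_n$.
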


Let $\bm e_0,\bm e_1,\ldots,\bm e_{n-1}$ be the standard basis of
$\mathbb Z^n$.
For each prime divisor $p$ of $n$, put
\[
\bm v_i^{(p)}:=\sum_{j=0}^{p-1}\bm e_{i+n_pj}
\qquad(0\leq i\leq n_p-1),
\]
and define
\[
L_n:=\left\{\bm u=(u_0,u_1,\ldots,u_{n-1})\in\mathbb Z^n
\ \middle|\ \sum_{j=0}^{n-1}u_j\zeta_n^j=0\right\}.
\]
Rewriting Theorem~\ref{thm:13} in terms of exponent vectors yields
the following corollary.

\begin{cor}\label{cor:14}
For every integer $n\geq2$, we have
\[
L_n=\sum_{\substack{p\mid n\\p:\,\mathrm{prime}}}
\sum_{i=0}^{n_p-1}\mathbb Z\,\bm v_i^{(p)}.
\]
In other words, all integer relations among the $n$th roots of unity
are generated, as a $\mathbb Z$-module, by cyclic shifts of the prime
cycles corresponding to the prime divisors of $n$.
\end{cor}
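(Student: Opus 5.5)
The plan is to translate Theorem~\ref{thm:13} coordinatewise through the additive isomorphism between the group ring and $\mathbb Z^n$. Define the $\mathbb Z$-module isomorphism
\[
\iota\colon\mathbb Z^n\to\mathbb ZG,\qquad \iota(\bm u):=\sum_{j=0}^{n-1}u_jg^j,
\]
which sends $\bm e_j$ to $g^j$. Since $\phi(g^j)=\zeta_n^j$, we have $\phi(\iota(\bm u))=\sum_j u_j\zeta_n^j$. Hence $\iota(L_n)=\Ker\phi$, and it suffices to identify $\iota^{-1}$ of the right-hand side of Theorem~\ref{thm:13}.

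Next I will compute the $\mathbb Z$-module $\mathbb ZG\,\sigma(P_p)$ for a fixed prime $p\mid n$. As an abelian group, $\mathbb ZG$ is spanned by $g^k$ for $0\leq k\leq n-1$. So $\mathbb ZG\,\sigma(P_p)$ is spanned over $\mathbb Z$ by the elements
\[
g^k\sigma(P_p)=\sum_{j=0}^{p-1}g^{k+n_pj}.
\]
Write $k=i+n_pq$ with $0\leq i\leq n_p-1$ and $0\leq q\leq p-1$. Multiplication by $g^{n_pq}$ permutes $P_p$, so $g^k\sigma(P_p)=g^i\sigma(P_p)$. Therefore $\mathbb ZG\,\sigma(P_p)=\sum_{i=0}^{n_p-1}\mathbb Z\,g^i\sigma(P_p)$. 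Moreover $\iota(\bm v_i^{(p)})=\sum_{j=0}^{p-1}g^{i+n_pj}=g^i\sigma(P_p)$, where the indices $i+n_pj$ are at most $n_p-1+n_p(p-1)=n-1$, so no reduction modulo $n$ is needed.

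Summing over $p$ and applying $\iota^{-1}$ then gives the displayed equality of $\mathbb Z$-modules. The second sentence of the corollary is just a restatement: $L_n$ is the module of integer relations among the $\zeta_n^j$, and the vectors $\bm v_i^{(p)}$ are the cyclic shifts of the prime cycles $\bm v_0^{(p)}$.

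There is no real obstacle. The only point needing care is the reduction from all shifts $g^k$ with $0\leq k\leq n-1$ to the range $0\leq i\leq n_p-1$, which uses the $g^{n_p}$-invariance of $\sigma(P_p)$. One should also check that the case where $n$ is prime, so $n_p=1$, is covered; it is.
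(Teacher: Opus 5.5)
Your proposal is correct and takes the same route as the paper, which obtains the corollary simply by ``rewriting Theorem~\ref{thm:13} in terms of exponent vectors.'' Your isomorphism $\iota$, the reduction $g^{i+n_pq}\sigma(P_p)=g^i\sigma(P_p)$, and the identification $\iota(\bm v_i^{(p)})=g^i\sigma(P_p)$ are exactly that rewriting spelled out in detail.
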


Define the set
\[
T_n:=\bigcup_{\substack{p\mid n\\p:\,\mathrm{prime}}}
\left\{\bm v_i^{(p)}\ \middle|\ 0\leq i\leq n_p-1\right\}.
\]
Write $\Gp(M_n)$ for the group generated by the additive semigroup
$M_n$:
\[
\Gp(M_n):=\{\bm\alpha-\bm\beta\mid\bm\alpha,\bm\beta\in M_n\}.
\]

The equality $\Gp(M_n)=L_n$ used in the argument for the following
lemma corresponds to Moulin Ollagnier--Nowicki~\cite[Proposition~1.3]
{MoulinOllagnierNowicki2013}.
The fact that $L_n$ is generated by $T_n$ as a $\mathbb Z$-module
follows from Corollary~\ref{cor:14}; see also
\cite[Theorem~3.8]{MoulinOllagnierNowicki2013}.

\begin{lem}\label{lem:15}
For the set $T_n$ and the additive semigroup $M_n$, we have
\[
\mathbb Z\mathchar`-\Span T_n=\Gp(M_n).
\]
\end{lem}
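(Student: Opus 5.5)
We prove the equality by establishing the chain
\[
\mathbb Z\mathchar`-\Span T_n\subset\Gp(M_n)\subset L_n\subset\mathbb Z\mathchar`-\Span T_n .
\]

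For the first inclusion, we observe that each $\bm v_i^{(p)}$ lies in $M_n$. Its entries are nonnegative, and
\[
\sum_{j=0}^{p-1}\zeta_n^{\,i+n_pj}=\zeta_n^i\sum_{j=0}^{p-1}\zeta_p^{\,j}=0,
\]
since $\zeta_n^{n_p}=\zeta_p$ is a primitive $p$th root of unity with $p\geq2$. Hence $T_n\subset M_n\subset\Gp(M_n)$. Because $\Gp(M_n)$ is an additive group, it contains the $\mathbb Z$-span of $T_n$.

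For the second inclusion, every $\bm\alpha-\bm\beta$ with $\bm\alpha,\bm\beta\in M_n$ is an integer vector. By linearity, $\sum_j(\alpha_j-\beta_j)\zeta_n^j=0-0=0$, so $\Gp(M_n)\subset L_n$.

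The third inclusion is exactly Corollary~\ref{cor:14}, which gives
\[
L_n=\sum_{p}\sum_{i}\mathbb Z\,\bm v_i^{(p)}=\mathbb Z\mathchar`-\Span T_n .
\]
The chain therefore closes, and all three sets coincide. As a byproduct we recover $\Gp(M_n)=L_n$, in agreement with Moulin Ollagnier--Nowicki.

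The only nontrivial input is Corollary~\ref{cor:14}, i.e.\ the R\'edei--de Bruijn--Schoenberg theorem. If a justification of the corollary from Theorem~\ref{thm:13} is needed, we identify $\bm u\in\mathbb Z^n$ with $\sum_j u_jg^j\in\mathbb ZG$. Under this identification $\bm u\in L_n$ if and only if $\sum_j u_jg^j\in\Ker\phi$, and $g^i\sigma(P_p)$ corresponds to $\bm v_{i}^{(p)}$, with the index read modulo $n_p$. Theorem~\ref{thm:13} then says that $\Ker\phi$ is the $\mathbb Z$-span of the elements $g^k\sigma(P_p)$. Each of these equals some $g^i\sigma(P_p)$ with $0\leq i\leq n_p-1$, since $g^{n_p}\sigma(P_p)=\sigma(P_p)$. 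This reduction of the index modulo $n_p$ is the one point requiring a little care; beyond it I expect no real obstacle.
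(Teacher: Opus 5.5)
Your proof is correct, but it closes the chain of inclusions differently from the paper. The two arguments share the same ingredients: Corollary~\ref{cor:14} for the equality between $L_n$ and the $\mathbb Z$-span of $T_n$, and the trivial inclusion $\Gp(M_n)\subset L_n$.

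\begin{itemize}
\item The paper proves the reverse inclusion $L_n\subset\Gp(M_n)$ directly. Since $\bm 1\in M_n$, every $\bm u\in L_n$ can be written as $(\bm u+N\bm 1)-N\bm 1$ with both terms in $M_n$.
\item You close the loop by checking $T_n\subset M_n$ by hand: each prime cycle is a rotated full sum of $p$th roots of unity.
\end{itemize}

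What each route buys:
\begin{itemize}
\item In the paper, $\Gp(M_n)=L_n$ is an elementary fact, independent of the R\'edei--de Bruijn--Schoenberg theorem. This is the Moulin Ollagnier--Nowicki equality, and the deep theorem is used only to identify $L_n$ with the $\mathbb Z$-span of $T_n$. In your argument, $\Gp(M_n)=L_n$ appears only as a byproduct of Corollary~\ref{cor:14}.
\item Your check $T_n\subset M_n$ is exactly the statement that the circulant determinants $\Theta_p(\bm y_i^{(p)})$ lie in $\Ker D_1$, so it fits the context naturally.
\item Your derivation of Corollary~\ref{cor:14} from Theorem~\ref{thm:13} is correct. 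This includes the index reduction via $g^{n_p}\sigma(P_p)=\sigma(P_p)$. It makes explicit a step the paper only describes as a rewriting.
\end{itemize}
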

\begin{proof}
Corollary~\ref{cor:14} gives
\[
L_n=\mathbb Z\mathchar`-\Span T_n.
\]
Since $M_n\subset L_n$ and $L_n$ is an additive group, we have
\[
\Gp(M_n)\subset L_n.
\]
To prove the reverse inclusion, take any
\[
\bm u=(u_0,u_1,\ldots,u_{n-1})\in L_n.
\]
Since $\bm1:=(1,1,\ldots,1)\in M_n$, we may choose a sufficiently
large nonnegative integer $N$ such that
$\bm u+N\bm1\in\mathbb Z_{\geq0}^n$. Then
\[
\bm u+N\bm1\in M_n,\qquad N\bm1\in M_n,
\]
and
\[
\bm u=(\bm u+N\bm1)-N\bm1.
\]
Thus
\[
L_n\subset\Gp(M_n).
\]
Combining the inclusions, we obtain
\[
\mathbb Z\mathchar`-\Span T_n=\Gp(M_n)=L_n.\qedhere
\]
\end{proof}

For each prime divisor $p$ of $n$, put $n_p:=n/p$.
For $0\leq i\leq n_p-1$, define
\[
y^{(p)}_{i,j}:=\sum_{l=0}^{n_p-1}\zeta_n^{i(j+pl)}x_{j+pl}
\qquad(0\leq j\leq p-1)
\]
and
\[
\Theta_p(\bm y_i^{(p)}):=
\det\begin{pmatrix}
y^{(p)}_{i,0}&y^{(p)}_{i,p-1}&\cdots&y^{(p)}_{i,1}\\
y^{(p)}_{i,1}&y^{(p)}_{i,0}&\cdots&y^{(p)}_{i,2}\\
\vdots&\vdots&\ddots&\vdots\\
y^{(p)}_{i,p-1}&y^{(p)}_{i,p-2}&\cdots&y^{(p)}_{i,0}
\end{pmatrix}.
\]
Let
\[
R_n:=\mathbb C[\Theta_p(\bm y_i^{(p)})\mid
p\text{ is a prime divisor of }n,\ 0\leq i\leq n_p-1]
\]
be the $\mathbb C$-subalgebra of $\mathbb C[\bm x]$ generated by these
determinants.
Here $G=\mathbb Z/n\mathbb Z$ and $g=1$, so $H=G$, $m=n$, and $r=1$.
Choose $t_1=0$ as the coset representative and abbreviate the Fourier
variables $y_{1,k}$ as
\[
y_k:=y_{1,k}=\sum_{l=0}^{n-1}\zeta_n^{kl}x_l
\qquad(0\leq k\leq n-1).
\]
Writing $\bm y:=(y_0,y_1,\ldots,y_{n-1})$, we have
\[
\bm y^{\bm v_i^{(p)}}=\prod_{j=0}^{p-1}y_{i+n_pj}.
\]
By \cite[Lemma~13]{YamaguchiOchiaiYamaguchi2025},
\[
\Theta_p(\bm y_i^{(p)})=\bm y^{\bm v_i^{(p)}}
=\prod_{j=0}^{p-1}y_{i+n_pj}.
\]
It follows that
\[
R_n=\mathbb C\left[\bm y^{\bm v}\ \middle|\ \bm v\in T_n\right].
\]

\begin{proof}[Proof of Theorem~\ref{thm:5}]
By Theorem~\ref{thm:1},
\[
\Ker D_1=\mathbb C[M_n].
\]
On the other hand, the algebra $R_n$ is generated by the monomials
corresponding to $T_n$.
In general, the field of fractions of a $\mathbb C$-subalgebra generated
by monomials is generated by those monomials as a field over $\mathbb C$.
Since each generating monomial is invertible in this field, the field
of fractions is determined solely by the additive group generated by
the exponent vectors of the generators.
Lemma~\ref{lem:15} gives
\[
\Gp(M_n)=\mathbb Z\mathchar`-\Span(T_n).
\]
Thus the additive group generated by the exponent vectors corresponding
to $\Ker D_1$ coincides with that generated by the exponent vectors
of the generators of $R_n$. Therefore,
\[
\Frac(\Ker D_1)=\Frac(R_n).\qedhere
\]
\end{proof}

\section{The kernel of the derivation $\Delta_g$}\label{sec:6}

For a finite group $G$ and an element $g\in G$,
set $H:=\langle g\rangle$, $m:=|H|$, and $r:=[G:H]$.
Given a left coset decomposition $G=\bigsqcup_{i=1}^{r}s_iH$,
define
\[
z_{i,j}:=\sum_{k=0}^{m-1}\zeta_m^{jk}x_{s_i g^k}
\qquad (1\leq i\leq r,\ 0\leq j\leq m-1).
\]
Then, as in the case of the operator $D_g$, we have
\[
\Delta_g(z_{i,j})=\zeta_m^j z_{i,j}.
\]
Thus, replacing the operator $D_g$ and the variables $y_{i,j}$
by $\Delta_g$ and $z_{i,j}$, respectively, yields
analogues of Theorems~\ref{thm:1},~\ref{thm:2}, and~\ref{thm:3}
and Corollary~\ref{cor:2}.
Moreover, when the group $G$ is cyclic, we have $D_g=\Delta_g$,
so Theorem~\ref{thm:5} and its corollary also hold without modification.

\clearpage
\bibliography{reference}

\medskip
\begin{flushleft}
Faculty of Education,\\
University of Miyazaki,\\
1-1 Gakuen Kibanadai-nishi,\\
Miyazaki 889-2192, Japan\\
\textit{Email address}: \href{mailto:n-yamaguchi@miyazaki-u.ac.jp}{n-yamaguchi@miyazaki-u.ac.jp}
\end{flushleft}

\end{document}